\newtheorem{theorem}{Theorem}[section]
\newtheorem{lemma}[theorem]{Lemma}
\newtheorem{example}[theorem]{Example}
\newtheorem{proposition}[theorem]{Proposition}
\newtheorem{corollary}[theorem]{Corollary}
\theoremstyle{definition} \newtheorem{definition}[theorem]{Definition}
\theoremstyle{definition} \newtheorem{hyp}[theorem]{Hypothesis}
\theoremstyle{definition} \newtheorem{remark}{Remark}
\title{Python Implementation and Construction of \\ Finite Abelian Groups}
\author{Paul Bradley\\
\small Mathematics Department\\[-0.8ex]
\small Queen Mary's Grammar School\\[-0.8ex] 
\small Walsall, WS1 2PG, U.K.\\
\small\tt pm-bradley@qmgs.walsall.sch.uk\\
\and
John Smethurst\\
\small Mathematics Department\\[-0.8ex]
\small Queen Mary's Grammar School\\[-0.8ex] 
\small Walsall, WS1 2PG, U.K.\\
\small\tt j-smethurst@qmgs.walsall.sch.uk\\
}
\begin{document}
\maketitle
\begin{abstract}
Here we present a working framework to establish finite abelian groups in python. The primary aim is to allow new A-level students to work with examples of finite abelian groups using open source software. We include the code used in the implementation of the framework. We also prove some useful results regarding finite abelian groups which are used to establish the functions and help show how number theoretic results can blend with computational power when studying algebra. The groups we establish are based on modular multiplication and addition. We include direct products of cyclic groups meaning the user has access to all finite abelian groups.
\end{abstract}

\section{Introduction}

New A-level Mathematics syllabi including the topics of technology in mathematics and the addition of early group theory have given schools a problem and an opportunity (for example see OCR~\cite{MEI}). Here we use technology beyond the standard software to develop a framework for group theory. This will allow students to explore the programming language \textsc{Python} and use computers to process information. The end result being more concrete examples of groups for them to explore. Giving pupils the opportunity to view groups as more than abstract ideas and generate groups in a variety of ways to explore the concept of isomorphism will also be useful. 

In this paper we present the inputs we  have used and explore the group theoretic ideas behind them. The framework will allow pupils to generate abelian groups using addition modulo $n$ and multiplication modulo $n$. They will be able to determine group orders, element orders, inverses, generate subgroups and find isomorphisms between two groups. They will also be able to determine the group structure and torsion coefficients for their groups. We supply a mixture of group theoretic approaches and justifications for some of the ideas students will meet alongside the \textsc{Python} code and commands. Some of the number theoretic approaches presented here go beyond the scope of the A-level courses we are considering. They act as bridges between computational group theory and theoretical group theory.

The authors acknowledge the existence of other algebra computation systems, for example \textsc{Magma.} These are far less frequently used in secondary schools. In many cases they will require students and staff to learn a new language. For ease of reference and reading when we give an example script for implementation we will usually include the output in an Appendix.

\section{Building a group}

We make use of the relationships in $\mathbb{N}$ to construct our groups. 
 
\begin{definition}
Let $n \in \mathbb{N}$ and let $\oplus$ by addition modulo $n$ and $\otimes$ be multiplication modulo $n$.

We can then form two types of group. The first is $$(n, \oplus) = \{0, 1, ..., (n-1)\} \mbox{ equipped with addition modulo } n.$$ The second is 
$$(n, \otimes) = \{a \in \mathbb{N}\  \vert \ a<n,  gcd(a,n)=1\} \mbox{ with multiplication mod }n.$$

An important function we will make use of is Euler's Phi function, $\phi(n)$. For $n \in \mathbb{N}$ let $\phi(n)$ equal the number of natural numbers less than $n$ that are coprime to $n$.

\end{definition}
The implementation for these two groups is given below
\begin{example}\label{Ex1}
\begin{verbatim}
# Import the class Group from the file groups.py
from groups import Group

# Create an instance of the group (10,+)
G = Group("add",10)

# print the group elements
print "Group Elements = ",G.g()

Group Elements =  [0, 1, 2, 3, 4, 5, 6, 7, 8, 9]

# Create an instance of the group (15,x)
H = Group("mult",15)

# print the group elements
print "Group Elements = ",H.g()

Group Elements =  [1, 2, 4, 7, 8, 11, 13, 14]
\end{verbatim}
\end{example}
Next we calculate the order of each group for any given input $n$. For $(n, \oplus)$ this is trivial as the order is $(n-1).$ For $(n, \otimes)$ we have by definition that the order is equal to $\phi(n)$. This function is added to the file \textsc{Groups.py}.

\begin{lemma}
\label{EPF}
Let $m, n \in \mathbb{N}, m\neq 1,$ be such that $m \vert n$ and let $H \cong \mathbb{Z}_n.$ If $p_1, p_2,..., p_r$ are the distinct prime divisors of $m$, then the number of elements in $H$ of order $m$ is 
$$\phi(m) = \frac{m}{p_1 p_2...p_r} (p_1-1)(p_2-1)...(p_r-1).$$
\end{lemma}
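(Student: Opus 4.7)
The plan is to split the lemma into two independent claims: first, that the number of elements of order $m$ in $H \cong \mathbb{Z}_n$ equals $\phi(m)$, and second, that $\phi(m)$ admits the stated closed form in terms of the distinct prime divisors of $m$.

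For the first claim, I would use the standard structure theorem for cyclic groups: since $m \mid n$, there is a unique subgroup $K \leq H$ of order $m$, and it is itself cyclic, isomorphic to $\mathbb{Z}_m$. Any element $h \in H$ of order $m$ generates a cyclic subgroup of order $m$, which by uniqueness must be $K$; conversely every generator of $K$ has order $m$. So the set of elements of $H$ of order $m$ is exactly the set of generators of $K$. The number of generators of a cyclic group of order $m$ is, by definition, the number of $k \in \{0,1,\dots,m-1\}$ with $\gcd(k,m)=1$, i.e.\ $\phi(m)$.

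For the second claim, I would write $m = p_1^{a_1} p_2^{a_2} \cdots p_r^{a_r}$ with each $a_i \geq 1$ and apply two facts about $\phi$: (i) multiplicativity on coprime arguments, which follows from the Chinese Remainder Theorem applied to the units, and (ii) the prime power formula $\phi(p^a) = p^a - p^{a-1} = p^{a-1}(p-1)$, obtained by counting the $p^{a-1}$ multiples of $p$ in $\{1,\dots,p^a\}$ and subtracting. Combining these,
\[
\phi(m) \;=\; \prod_{i=1}^{r} \phi(p_i^{a_i}) \;=\; \prod_{i=1}^{r} p_i^{a_i-1}(p_i-1) \;=\; \frac{p_1^{a_1}\cdots p_r^{a_r}}{p_1 p_2 \cdots p_r}\,(p_1-1)(p_2-1)\cdots(p_r-1),
\]
which is exactly the stated formula since the numerator equals $m$.

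The main obstacle, such as it is, lies not in the group-theoretic identification (which is essentially routine once one invokes the subgroup lattice of a cyclic group) but in the number-theoretic step: justifying the multiplicativity of $\phi$ in a way that is appropriate for the intended A-level audience of the paper. I would therefore expect the bulk of the write-up to be devoted to either a short CRT argument or, alternatively, an inclusion–exclusion argument that sieves out multiples of each prime divisor $p_i$ directly from $\{1,2,\dots,m\}$; the latter has the advantage of yielding the product formula in a single step and may fit the expository aim of the paper more naturally.
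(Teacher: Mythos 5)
Your proposal is correct, and its group-theoretic half is essentially the paper's argument: the paper's proof consists of the single observation that all elements of order $m$ in $H$ lie in a cyclic subgroup of order $m$, so the count is the number of $j \in \{1,\dots,m\}$ coprime to $m$, ``which is the definition of $\phi(m)$.'' You make explicit the step the paper leaves implicit --- that the subgroup of order $m$ in a cyclic group is \emph{unique}, which is what forces every element of order $m$ into the same copy of $\mathbb{Z}_m$ and identifies them with its generators. Where you genuinely diverge is the second half: the paper never proves that the coprime-count definition of $\phi(m)$ equals the product formula $\frac{m}{p_1\cdots p_r}(p_1-1)\cdots(p_r-1)$; it simply asserts the formula in the lemma statement and, in the following remark, cites Rosen for it. Your derivation via multiplicativity of $\phi$ (by CRT) and $\phi(p^a)=p^{a-1}(p-1)$ fills that gap and makes the lemma self-contained, at the cost of invoking machinery (CRT) that the paper's intended A-level audience may not have; your suggested inclusion--exclusion alternative would be the more elementary route to the same identity. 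In short: same core argument, but you prove strictly more than the paper does.
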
  
 
\begin{proof}
The elements of order $m$ in $H$ are all contained in a cyclic subgroup of order $m$. Therefore the number of elements in $H$ of order $m$ is equal to the number of $j \in \{ 1,2, ...,m\}$ which are coprime to $m$. This is the definition of $\phi(m).$
\end{proof}
\begin{remark}
The definition of $\phi$ given in Lemma~\ref{EPF} is given in \cite{Rosen}.
\end{remark}

\begin{lemma}
For $\ell \in (n, \oplus)$, the inverse element $\ell^{-1} = n-\ell$. 
\end{lemma}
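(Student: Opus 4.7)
The plan is to verify directly from the definition of the group operation that $n - \ell$ satisfies the defining property of an inverse, namely that $\ell \oplus (n - \ell)$ equals the identity element of $(n, \oplus)$. Since the identity in $(n, \oplus)$ is $0$, and the operation is addition reduced modulo $n$, this reduces to a one-line modular arithmetic check.

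First I would fix an arbitrary $\ell \in \{0, 1, \ldots, n-1\}$ and compute $\ell \oplus (n - \ell)$. In $\mathbb{Z}$ this integer sum is exactly $n$, which reduces to $0$ modulo $n$, so $\ell \oplus (n - \ell) = 0$ in $(n, \oplus)$. Since $(n, \oplus)$ is an abelian group, inverses are unique, so this computation both exhibits $n - \ell$ as an inverse and identifies it as the inverse.

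The only subtlety is the boundary case $\ell = 0$, where the formula produces $n - 0 = n$, which is not literally one of the listed elements $\{0, 1, \ldots, n-1\}$. I would handle this by noting that the expression $n - \ell$ is understood modulo $n$ (consistent with the rest of the framework), so that $n - 0 \equiv 0$, matching the fact that the identity is its own inverse. Equivalently, one can restrict the claim to $\ell \neq 0$ and treat $\ell = 0$ separately as trivial.

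I do not anticipate any real obstacle here; the statement is essentially a direct translation of the definition of additive inverse in $\mathbb{Z}/n\mathbb{Z}$, and the only thing requiring care is making the convention on representatives of residue classes explicit so that the formula $\ell^{-1} = n - \ell$ is well-defined for every element of the group.
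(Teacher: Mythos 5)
Your proposal is correct and follows essentially the same route as the paper, which simply observes that the identity is $0$ and lets the computation $\ell + (n-\ell) = n \equiv 0 \pmod{n}$ go without saying. Your explicit treatment of the boundary case $\ell = 0$ (where $n - \ell = n$ must be read modulo $n$) is a detail the paper glosses over, but it does not change the argument.
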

\begin{proof}
As $\oplus$ is addition the identity element is $0$. The result follows immediately.
\end{proof}
\begin{lemma}
For $\ell \in (n, \otimes)$, the inverse element $\ell^{-1} = \ell^{(\phi(n)-1)}$. 
\end{lemma}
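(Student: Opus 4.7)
The plan is to recognise this as a direct consequence of Euler's theorem, or equivalently the order-of-the-group corollary of Lagrange's theorem. Since $(n,\otimes)$ is a finite group of order $\phi(n)$ (as noted just above the lemma), raising any element to the group order returns the identity.

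First I would state that the identity of $(n,\otimes)$ is $1$, since $\otimes$ is multiplication modulo $n$ and $1 \otimes a = a$ for every $a$ coprime to $n$. Next I would invoke the standard fact that in any finite group $G$ of order $|G|$, every element $g$ satisfies $g^{|G|} = e$; applied here this gives $\ell^{\phi(n)} \equiv 1 \pmod{n}$ for each $\ell \in (n,\otimes)$. From this the conclusion is immediate: writing $\ell^{\phi(n)} = \ell \otimes \ell^{\phi(n)-1}$ shows that $\ell^{\phi(n)-1}$ is the multiplicative inverse of $\ell$, as required. One should briefly observe that $\phi(n) \geq 1$ so the exponent $\phi(n)-1$ is a non-negative integer, and in the degenerate case $n=1$ or $n=2$ the formula still collapses correctly to $\ell^{-1} = \ell = 1$.

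There is no real obstacle here; the only small decision is whether to cite Lagrange's/Euler's theorem as a black box or to give a short self-contained argument (for instance, by noting that the map $x \mapsto \ell \otimes x$ permutes $(n,\otimes)$, so the product of all elements equals $\ell^{\phi(n)}$ times itself, forcing $\ell^{\phi(n)} \equiv 1 \pmod n$). Given the A-level audience of the paper and that Euler's $\phi$ has just been introduced in Lemma~\ref{EPF}, I would favour stating Euler's theorem explicitly and then deducing the inverse formula in one line.
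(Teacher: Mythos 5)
Your proof is correct and follows essentially the same route as the paper: identify the identity as $1$, apply Euler's theorem to get $\ell^{\phi(n)} \equiv 1 \pmod{n}$, and factor out one copy of $\ell$. In fact your version is slightly more careful than the paper's own proof, which misstates Euler's theorem as $\ell^{\phi(n)} \equiv \ell \pmod{n}$ rather than $\ell^{\phi(n)} \equiv 1 \pmod{n}$.
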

\begin{proof}
The identity element in the group $(n, \otimes)$ is $1$. As $gcd(\ell, n)=1$ follows from Euler's theorem that $\ell^{\phi(n)}\equiv \ell \mbox{ mod }n$. Hence the statement follows directly.
\end{proof}

The commands for group operations and inverses are given below.
\begin{example}\label{Ex2}
\begin{verbatim}
# Import the class Group from the file groups.py
from groups import Group

# Create an instance of the group (10,+)
print "Creating a group (10,+)"
G = Group("add",10)

# print the group components
Gg = G.g()
print "Group Elements = ",Gg
print "Group Identity = ",G.i
print "Group Order = ",G.o
print "Inverse of element 3 = ",G.inv(3)
print "Multiplication of 7 and 6 = ",G.op(7,6)
print "7 to the power 3 = ",G.pow(7,3)

Ggo = G.go()
GgoDict = dict(zip(Gg, Ggo))
for g in GgoDict:
    print "Element: ",g," has order: ",GgoDict[g]

\end{verbatim}
\end{example}
Secondly we perform the same functions on our group $H\cong (15, \otimes)$
\begin{example}\label{Ex3}
\begin{verbatim}
# Create an instance of the group (15,x)
print "Creating a group (15,x)"
H = Group("mult",15)

# print the group components
Hh = H.g()
print "Group Elements = ",Hh
print "Group Identity = ",H.i
print "Group Order = ",H.o
print "Inverse of element 2 = ",H.inv(2)
print "Multiplication of 2 and 10 = ",H.op(2,10)
print "2 to the power 10 = ",H.pow(2,10)
print "Group orders:"

Hho = H.go()
HhoDict = dict(zip(Hh, Hho))
for h in HhoDict:
    print "Element: ",h," has order: ",HhoDict[h]
\end{verbatim}
\end{example}
 
\section{Subgroups}

We wish to establish the orders of elements. We use the same process as generating the subgroup $\left< g \right>$ for $g \in G$. This is because the order of $g$, denoted $o(g)=|\left< g \right>|.$ There are many practical reasons that it would be preferable to generate this subgroup rather than simply calculate the order numerically. We are able to make use of the group $G$ being abelian when determining the $\left< S \right>$ for some subset $S\subseteq G$. 

\begin{definition}
Let $G$ be a group. Then for $g \in G$ let $o(g)$ denote the order of the group element $g$. Also let $1_G$ denote the identity element of $G$.
\end{definition}
\begin{lemma}\label{subgen}
For a non-empty subset of $S\subset G$, where $S=\{g_1, g_2,...,g_s\}$. Then if $a_i=o(g_i)$ then $\left< S \right>=\{g_1^{b_1} g_2^{b_2} ...g_s^{b_s}\ | \ 0\leq b_i\leq a_i \}$.
\end{lemma}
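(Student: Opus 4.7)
The plan is to show the two set inclusions separately, using $T$ to denote the set on the right-hand side.

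For the containment $T \subseteq \langle S \rangle$, I would observe that each generator $g_i$ belongs to $\langle S \rangle$, and $\langle S \rangle$ is closed under the group operation. Hence every word $g_1^{b_1} g_2^{b_2} \cdots g_s^{b_s}$ with non-negative integer exponents lies in $\langle S \rangle$, which immediately gives the inclusion.

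For the reverse containment $\langle S \rangle \subseteq T$, I would recall that $\langle S \rangle$ consists of all finite products of elements of $S$ and their inverses. Starting from an arbitrary such product, I would use the hypothesis that $G$ is abelian to commute the factors past one another, collecting all occurrences of each generator $g_i$ into a single block. This rewrites the element as $g_1^{c_1} g_2^{c_2} \cdots g_s^{c_s}$ for some integers $c_i \in \mathbb{Z}$ (a priori possibly negative). Since $a_i = o(g_i)$, we have $g_i^{a_i} = 1_G$, so $g_i^{c_i} = g_i^{c_i \bmod a_i}$; replacing each $c_i$ by its residue $b_i \in \{0, 1, \ldots, a_i - 1\}$ produces an element of $T$ equal to the original one.

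The main obstacle is being careful about the second inclusion: one must justify both the rearrangement (which requires commutativity, hence the abelian hypothesis is essential) and the reduction of exponents into the interval $\{0, 1, \ldots, a_i\}$ (which relies on $g_i$ having finite order $a_i$, itself a consequence of $G$ being a subgroup of a finite group here). Everything else is bookkeeping; no estimate or inductive construction is needed, as the proof is essentially a normal-form argument for words in a finitely generated abelian group whose generators have known finite orders.
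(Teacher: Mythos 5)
Your proof is correct and follows essentially the same route as the paper's: use commutativity to collect the occurrences of each $g_i$ together, then use the finite order $a_i = o(g_i)$ to reduce the exponents into the stated range. Your write-up is in fact more careful than the paper's (you make both inclusions explicit and handle inverses and negative exponents), but the underlying argument is the same.
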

\begin{proof}
As $G$ is finite each element has finite order. Then without loss of generality we can state that $\left< S \right>$ is the set of all combinations of the elements in $S$. As there are infinitely many such combinations we can use commutativity to collect occurrences of $g_i$ in a string together. These will then cycle through the powers of $g_i$ until we reach the identity element $1_G$. The result follows.

\end{proof}

The implementation of generating a subgroup or checking whether or not a subset is a subgroup are very important skills. Hence this is left as an exercise for the reader. We do however provide the following guidance.

If we are in the situation of wishing to test whether or not a subset $S\subseteq G$ is a subgroup we have two options. The first is to generate the subgroup $\left< S \right> \leq G$ and to determine whether or not $S=\left< S \right> $. The second is to check the subgroup criterion which we now state without proof.

\begin{lemma}[Subgroup Criterion]
Let $G$ be a group. Let $S$ be a non-empty subset of $G$. Then $S \leq G$ if and only if for all $a, b \in S$ we have that $ab^{-1} \in S$.
\end{lemma}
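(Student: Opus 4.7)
The plan is to prove the subgroup criterion by handling the two implications separately, with the forward direction being a quick consequence of the subgroup axioms and the backward direction requiring a short bootstrap that extracts the identity, then inverses, then closure from the single hypothesis $ab^{-1}\in S$.

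First I would dispatch the easy forward direction. Assuming $S\leq G$, the subgroup axioms give that $S$ is closed under both the group operation and inversion. So for any $a,b\in S$ we have $b^{-1}\in S$, and then $ab^{-1}\in S$ by closure. This is essentially a one-line observation.

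For the backward direction I would verify the three subgroup axioms (identity, inverses, closure) in that order, since each step will feed the next. Because $S$ is non-empty, pick some $a\in S$; then applying the hypothesis with both entries equal to $a$ gives $aa^{-1}=1_G\in S$, so $S$ contains the identity. Next, for any $b\in S$ apply the hypothesis with the pair $(1_G,b)$ to obtain $1_G\cdot b^{-1}=b^{-1}\in S$, giving closure under inverses. Finally, for any $a,b\in S$ we now know $b^{-1}\in S$, so applying the hypothesis to the pair $(a,b^{-1})$ yields $a(b^{-1})^{-1}=ab\in S$, which is closure under the operation. Associativity is inherited from $G$, so $S$ is indeed a subgroup.

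There is no real obstacle here; the only subtlety is ordering the three steps correctly so that each one is justified by what has already been established, in particular making sure $1_G\in S$ is produced \emph{before} it is used to deduce inverses, and that inverses are established \emph{before} they are used to deduce closure. I would also note explicitly that the hypothesis requires $S$ to be non-empty, since otherwise the identity-extraction step cannot even begin; this is why the non-emptiness assumption appears in the statement.
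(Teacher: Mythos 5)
Your proof is correct and is the standard argument for the one-step subgroup test; the ordering of the three steps (identity, then inverses, then closure) is handled properly, and you rightly flag where non-emptiness is used. Note, however, that the paper explicitly states this lemma \emph{without} proof, so there is no argument in the text to compare against --- your write-up simply supplies the omitted standard proof.
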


The \textsc{Python} code produces the subgroup generated by a set of elements. That is $\left<S\right> \le G$ where $S\subseteq G$. This is obtained as shown in Lemma~\ref{subgen}.

We note that $\left<S\right>$ is the smallest subgroup of $G$ containing every element in $S$. We also say that if $\left< S \right> =G$ then $S$ is a generating set for $G$.

\begin{example}\label{Ex5}
 \begin{verbatim}
# Importing required modules
from groups import Group
from pure import setCrossProd

# Creating group and subgroup
print "Create (120,+)"
G = Group("add",120)

print "Create subgroup with elements 60,30 and 15"
g60cycle = G.gcycle(60)
print "60 creates",g60cycle
g30cycle = G.gcycle(30)
print "30 creates",g30cycle
g15cycle = G.gcycle(15)
print "15 creates",g15cycle
Gx = setCrossProd([g60cycle,g30cycle,g15cycle])
GSg = []
for x in Gx:
    s = G.op(x)
    GSg.append(s)
GSg = set(GSg)
GS = Group("add",120,GSg)

# print the subgroup components
GSg = GS.g()
print "Subgroup Elements = ",GSg
print "Subgroup Identity = ",GS.i
print "Subgroup Order = ",GS.o
print "Inverse of element 15 = ",GS.inv(15)
print "Multiplication of 15 and 30 = ",GS.op(15,30)
print "15 to the power 3 = ",GS.pow(15,3)

GSgo = GS.go()
for g,go in zip(GSg,GSgo):
    print "Element: ",g," has order: ",go
\end{verbatim}
\end{example}

\begin{example}\label{Ex6}
\begin{verbatim}
# Importing required modules
from groups import Group
from pure import setCrossProd

# Creating group and subgroup
print "Create (64,x)"
G = Group("mult",64)

print "Create subgroup with elements 17 and 7"
g17cycle = G.gcycle(17)
print "17 creates = ", g17cycle
g7cycle = G.gcycle(7)
print "7 creates = ",g7cycle
Gx = setCrossProd([g17cycle,g7cycle])
GSg = []
for x in Gx:
    s = G.op(x)
    GSg.append(s)
GSg = set(GSg)
GS = Group("mult",32,GSg)

# print the subgroup components
GSg = GS.g()
print "Subgroup Elements = ",GSg
print "Subgroup Identity = ",GS.i
print "Subgroup Order = ",GS.o
print "Inverse of element 25 = ",GS.inv(25)
print "Multiplication of 7 and 25 = ",GS.op(7,25)
print "18 to the power 3 = ",GS.pow(15,3)

GSgo = GS.go()
for g,go in zip(GSg,GSgo):
    print "Element: ",g," has order: ",go
\end{verbatim}
\end{example}

\section{Direct Products}
\begin{definition}
Let $G$ and $H$ be groups. Then it is possible to form a third group from these. One method for doing this is the Direct Product denoted $G\times H$. This is the group of elements which are ordered pairs $(g,h)$ where $g\in G, h\in H$. The group operation is point-wise multiplication. That is 
$$(g_1, h_1)*(g_2, h_2)=(g_1g_2, h_1h_2)$$
where $g_1g_2$ is calculated in $G$ and $h_1h_2$ is calculated in $H$.
\end{definition}

This allows us to create a great many more abelian groups from known groups. In fact we can create all finite abelian groups (see section 5 for further details). There is no restriction on the number of groups we include in the direct product. However determining whether or not we have a ``new" group is a non-trivial question. It should be clear that 
$$G\times H\cong H\times G$$
and that $$|G\times H|=|G| \times |H|.$$
\begin{example}\label{Ex8}
Here we form the direct product of two groups.
\begin{verbatim}
# Import the class Group from the file groups.py
from groups import Group

# Create an instance of the group (5,+)x(9,x)
print "Create a group (5,+)x(9,x)"
G = Group(["add","mult"],[5,9])

# print the group components
Gg = G.g()
print "Group elements:"
for ind in Gg:
    print "Element ",ind," is ",Gg[ind]
Gg = Gg.values()

print "Group Identity = ",G.i
print "Group Order = ",G.o

g3 = G.g(3)
print "Element 3 = ",G.g(3)
print "Inverse of element 3 = ",G.inv(g3)

g7 = G.g(7)
g6 = G.g(6)
print "Element 7 = ",g7
print "Element 6 = ",g6
print "Multiplication of element 7 and element 6=",G.op(g7,g6)
print "Element 7 to the power of 3 = ",G.pow(g7,3)

print "Group orders:"
Ggo = G.go()
for (g,go) in zip(Gg,Ggo):
    print "Element ",g," has order ",go
\end{verbatim}
\end{example}

\begin{example}\label{Ex7}
\begin{verbatim}
# Importing require modules
from groups import Group
from pure import setCrossProd

# Creating group and subgroup
print "Create (6,+)x(9,x)"
G = Group(["add","mult"],[6,9])
Gg = G.g()
print "Group elements:"
for ind in Gg:
    print "Element ",ind," is ",Gg[ind]

print "Create subgroup with elements 2 and 22"
g2 = G.g(2)
g22 = G.g(22)
g2cycle = G.gcycle(g2)
print "2 creates = ", g2cycle
g22cycle = G.gcycle(g22)
print "22 creates = ", g22cycle

Gx = setCrossProd([g2cycle,g22cycle])
GSg = []

for x in Gx:
    s = G.op(x)
    GSg.append(s)
GSg = [list(x) for x in set(tuple(x) for x in GSg)]
GS = Group(["add","mult"],[6,9],GSg)

# print the subgroup components
GSg = GS.g()
print "Subgroup elements:"
for ind in GSg:
    print "Element ",ind," is ",GSg[ind]
GSg = GSg.values()

print "Subgroup Identity = ",GS.i
print "Subgroup Order = ",GS.o

g3 = GS.g(3)
print "Element 3 = ",GS.g(3)
print "Inverse of element 3 = ",GS.inv(g3)

g7 = GS.g(7)
g6 = GS.g(6)
print "Element 7 = ",g7
print "Element 6 = ",g6
print "Multiplication of element 7 and element 6=",GS.op(g7,g6)
print "Element 7 to the power of 3 = ",GS.pow(g7,3)

print "Group orders:"
GSgo = GS.go()
for (g,go) in zip(GSg,GSgo):
    print "Element ",g," has order ",go
\end{verbatim}
\end{example}
\section{Isomorphism}
 
Our aim is to provide a method of determining whether two abelian groups generated are isomorphic. Here we make use of the Classification of Finite Abelian Groups.

It is known that any finite abelian group can be expressed as a direct product of cyclic subgroups of prime-power order. This will allow us to determine whether two groups are isomorphic.

\begin{proposition}
Let $G$ and $H$ be finite abelian groups such that both $G$ and $H$ have identical numbers of elements of any given order. Then $G \cong H$. 
\end{proposition}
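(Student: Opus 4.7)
The plan is to invoke the Classification of Finite Abelian Groups, to which the statement of the proposition already alludes, and reduce the claim to a combinatorial fact about partitions attached to abelian $p$-groups. By the classification I can write $G \cong \mathbb{Z}_{p_1^{a_1}} \times \cdots \times \mathbb{Z}_{p_r^{a_r}}$ and $H$ similarly, so the goal becomes showing that the two multisets of prime powers appearing in these decompositions are equal.

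First, I would split both groups by primes. For each prime $p$, the $p$-primary component of a finite abelian group---the subgroup of elements whose order is a power of $p$---is precisely the direct product of those cyclic factors $\mathbb{Z}_{p^{a_i}}$ with $p_i = p$. Restricting the hypothesis to orders $m = p^k$ shows that the $p$-primary components of $G$ and $H$ have identical element-order statistics. It therefore suffices to prove the proposition under the assumption that $G$ and $H$ are abelian $p$-groups for a single fixed prime $p$.

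Second, assume $G \cong \mathbb{Z}_{p^{a_1}} \times \cdots \times \mathbb{Z}_{p^{a_t}}$ with $a_1 \geq \cdots \geq a_t \geq 1$, and analogously $H$ with exponents $b_1 \geq \cdots \geq b_u$. From the given order statistics I know, for every $k \geq 0$, the quantity $N_k := \#\{g \in G : g^{p^k} = 1_G\}$, obtained by summing counts over all orders dividing $p^k$. A direct calculation inside each cyclic factor (an element of $\mathbb{Z}_{p^{a_i}}$ is killed by $p^k$ iff it lies in the unique subgroup of order $p^{\min(k, a_i)}$) yields $N_k = \prod_{i=1}^{t} p^{\min(k, a_i)}$, so $\log_p N_k = \sum_i \min(k, a_i)$. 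Taking successive differences $\log_p N_k - \log_p N_{k-1}$ recovers $\#\{i : a_i \geq k\}$, which in turn recovers the partition $(a_1, \ldots, a_t)$. The same numbers computed for $H$ must agree with those for $G$, so $(a_i) = (b_j)$ as multisets and $G \cong H$.

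The main obstacle is the bookkeeping of the intermediate step: establishing the formula $N_k = \prod_i p^{\min(k, a_i)}$ cleanly and then inverting the sequence $(N_k)$ to extract the partition. Once this is done, combining it with the uniqueness clause of the Classification of Finite Abelian Groups (used in both the prime decomposition and the $p$-primary step) completes the argument with no further difficulty.
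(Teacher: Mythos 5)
Your proposal is correct, but it takes a genuinely different route from the paper. The paper argues by a greedy ``peeling'' of the list of element orders: repeatedly pick an element of largest remaining prime-power order $p^a$, form the cyclic subgroup it generates, subtract its contribution to the order statistics ($\phi(p^j)$ elements of order $p^j$ for each $j \leq a$, via Lemma~\ref{EPF}), and iterate until the list is exhausted; since the sequence of extracted cyclic factors depends only on the order statistics, $G$ and $H$ are built from the same factors. You instead fix a prime $p$, form the torsion counts $N_k = \#\{g : g^{p^k} = 1\}$, establish $N_k = \prod_i p^{\min(k,a_i)}$, and invert by taking successive differences of $\log_p N_k$ to recover the exponent partition, finishing with the uniqueness clause of the classification. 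Your inversion is the cleaner and more rigorous of the two: the torsion counts factor over a direct product, so you never have to track which elements are ``removed'' at each stage --- a point the paper's peeling argument leaves informal, since when several cyclic factors share the prime $p$ the elements of a given order are not confined to a single cyclic subgroup and the subtraction bookkeeping needs care. It is also worth noting that your quantity $N_k$ is exactly $\prod_{i} \eta(p,k,m_i)$ from Theorem~\ref{primecount}, so your argument meshes naturally with the counting formula the paper proves later; the paper's approach, in exchange, is more algorithmic in spirit and mirrors the reconstruction procedure a student would actually run on a list of element orders.
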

\begin{proof}
As we can write $G$ as a product of cyclic groups of prime power order we can construct it from its list of element orders. If we take an element of largest prime power order on the list, say $p^a$, we can form the cyclic subgroup containing this element. The subgroup will contain $p^a-1$ non identity elements. If we now remove the corresponding number of elements which will be contained in this subgroup we have a reduced list. By Lemma~\ref{EPF} this is $\phi(p^a)$ elements of order $p^a$, $\phi(p^{a-1})$ elements of order $p^{a-1}$ and so on.
 We can repeat this process until we have all of the elements of prime power order have been removed. The direct product of the subgroups will be $G$. As we constructed this direct product only using the element orders it is true that we also constructed $H$.   
\end{proof}

This approach means when establishing isomorphism of two finite abelian groups of order $n$, it is enough to consider the number of $p$-elements for all $p|n$.

We wish to allow the system to compare the number of elements with prime power order to standard known groups. The Classification of Finite Abelian Groups tells us that for a finite abelian group $G$ we have
\begin{eqnarray}\label{DP}
G\cong \mathbb{Z}_{m_1} \times \mathbb{Z}_{m_2} \times....\times \mathbb{Z}_{m_k}
\end{eqnarray}
where $m_i |m_{i+1}$ for all $1\leq i\leq k-1.$

Moreover the list of $m_is$ is unique. This leaves us with the problem of enumerating the number of elements in $G\cong \mathbb{Z}_{m_1} \times \mathbb{Z}_{m_2} \times ....\times \mathbb{Z}_{m_k}$ with prime power order for some $p\vert |G|$.

\begin{definition}
We define a function $\lambda(a,b)$ to equal the minimum of $\{a,b\}$ for $a,b \in \mathbb{Z}$. Next we define two functions which will appear in our count. Let $a,m,p \in \mathbb{N}$ with $p$ a prime. Then we can write $m=p^b q$ where $p\nmid q,$ we note that $b=0$ is a possibility. We now define $\eta(p,a,m) = p^{\lambda(a,b)}$ and $\eta^-(p,a,m)= p^{\lambda(a-1,b)}$.

\end{definition}
The next theorem is provided to allow students to compute the number of elements of prime power order for a finite abelian group without having to construct the elements. In practice this will require more programming to achieve and is given here as a theoretic approach. Counting the number of elements of any given order in a group is a non-trivial task.
\begin{theorem}
\label{primecount}
Let $G$ be a finite abelian group of order $n$ and set $$G\cong \mathbb{Z}_{m_1} \times \mathbb{Z}_{m_2} \times ....\times \mathbb{Z}_{m_k}$$
where $m_i |m_{i+1}$ for all $1\leq i\leq k-1.$ Then let $p^a$ be a prime power dividing $n$. Then $G$ has precisely 
$$\prod_{i=1}^k \eta(p,a,m_i) - \prod_{i=1}^k \eta^-(p,a,m_i)$$

elements of order $p^a$.
\end{theorem}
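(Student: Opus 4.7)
The plan is to use inclusion--exclusion on the set of elements whose order \emph{divides} a prime power. An element has order exactly $p^a$ if and only if its order divides $p^a$ but does not divide $p^{a-1}$, so the target count equals
\[
\#\{g \in G : p^a g = 0\} - \#\{g \in G : p^{a-1} g = 0\}.
\]
Since $G$ is written additively as the direct product $\mathbb{Z}_{m_1} \times \cdots \times \mathbb{Z}_{m_k}$, the condition $p^a g = 0$ is equivalent to $p^a x_i \equiv 0 \pmod{m_i}$ for each coordinate $x_i$. Thus both counts factor over the coordinates, and the problem reduces to counting, for each $i$, the number of $x_i \in \mathbb{Z}_{m_i}$ with $p^a x_i \equiv 0 \pmod{m_i}$ (and similarly for $p^{a-1}$).

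Next I would carry out the single-factor count. Writing $m_i = p^{b_i} q_i$ with $\gcd(p, q_i) = 1$, the set of $x_i \in \mathbb{Z}_{m_i}$ satisfying $m_i \mid p^a x_i$ is exactly the unique subgroup of $\mathbb{Z}_{m_i}$ of order $\gcd(p^a, m_i)$. Because $\gcd(p, q_i) = 1$, this gcd equals $p^{\min(a, b_i)} = p^{\lambda(a, b_i)}$, which is precisely $\eta(p, a, m_i)$. The same computation with $a$ replaced by $a-1$ gives $\eta^-(p, a, m_i) = p^{\lambda(a-1, b_i)}$.

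Combining these, the number of elements of $G$ with order dividing $p^a$ is $\prod_{i=1}^k \eta(p, a, m_i)$, the number with order dividing $p^{a-1}$ is $\prod_{i=1}^k \eta^-(p, a, m_i)$, and subtracting gives the desired formula.

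The main obstacle is being careful with the edge cases in the single-coordinate count: when $b_i = 0$ (so $p \nmid m_i$), the only solution is $x_i = 0$, giving $\eta = p^0 = 1$, which the $\lambda$-formula handles correctly; and when $a \geq b_i$, the entire group $\mathbb{Z}_{m_i}$ is killed by $p^a$, giving $\eta = p^{b_i}$ which is again what $\lambda(a, b_i) = b_i$ yields. Once these checks are in place, the rest of the argument is a clean coordinatewise factorisation followed by a difference.
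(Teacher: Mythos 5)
Your proposal is correct and follows essentially the same route as the paper: both count the elements of order dividing $p^a$ coordinatewise, do the same for $p^{a-1}$, and take the difference of the two products. The only (cosmetic) difference is in the per-coordinate count, where you identify the $p^a$-torsion of $\mathbb{Z}_{m_i}$ as the subgroup of order $\gcd(p^a,m_i)=p^{\lambda(a,b_i)}$, while the paper reaches the same value by summing $1+\phi(p)+\dots+\phi(p^{\lambda(a,b_i)})$ and telescoping.
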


\begin{proof}
Using \ref{EPF} we can count the number of elements with order $p^b$ for some $b\leq a$ from each cyclic group in our direct product of $G$. For an element of $g$ to have order $p^a$ it must take elements of order $p^b | p^a$ from each cyclic component and at least one element of order $p^a$. For each $m_i$ there are $\phi(p^b)$ elements of order $p^b$. This gives us a total count of elements of $\mathbb{Z}_{m_i}$ with relevant orders of $$(1+\phi(p)+\phi(p^2)+...+\phi(p^{\lambda(a,a_i)})).$$

\begin{eqnarray*}
 (1+\phi(p)+\phi(p^2)+...+\phi(p^{\lambda(a,a_i)})) & = & (1+ (p-1) \\
				& + & p(p-1)+...+p^{\lambda(a,a_i)-1}(p-1)) \\
  & = & \left( 1+(p-1)\frac{p^{\lambda(a,a_i)}-1}{p-1}\right) \\
   & = & p^{\lambda(a,a_i)}.
\end{eqnarray*}

There will therefore be $\prod_{i=1}^k \eta(p,a,m_i)$ elements with orders $p^b$ where $b\leq a$ in $G$.

However, we will need to remove from our count any combinations which will not contain an element of order $p^a$. Following the same strategy as above we see that there will be $p^{\lambda(a-1,a_i)}$ such contributions to the count for each cyclic subgroup $\mathbb{Z}_{m_i}$. This leads to the final statement.

\end{proof}

Using Theorem~\ref{primecount} we are able to establish lists of all finite abelian groups of order $n$ to test for isomorphism.

It is often preferable to write an abelian group given in the form of a direct product of cyclic groups in the form given in equation (\ref{DP}). We can achieve this using the following process.

Split each cyclic group in the product into cyclic groups of prime powers. Then combining each cyclic $p-$group of largest order for each prime, $p$, to give $\mathbb{Z}_{m_1}$. Repeat the process for the remaining cyclic groups to form $\mathbb{Z}_{m_2}$. Continue in this manner until all cyclic subgroups have been assigned.

The described method will allow us to enter orders of cyclic groups in a direct product and determine the torsion coefficients for that group.
\begin{example}
\begin{eqnarray*}
 \mathbb{Z}_{24} \times \mathbb{Z}_{32} \times \mathbb{Z}_{42} & \cong & (\mathbb{Z}_{8} \times \mathbb{Z}_{3}) \times (\mathbb{Z}_{32}) \times (\mathbb{Z}_{7} \times \mathbb{Z}_{3} \times \mathbb{Z}_2) \\
  & \cong & (\mathbb{Z}_{32} \times \mathbb{Z}_{7} \times \mathbb{Z}_{3}) \times (\mathbb{Z}_{8} \times \mathbb{Z}_{3}) \times (\mathbb{Z}_2) \\
  & \cong & 	\mathbb{Z}_{672} \times \mathbb{Z}_{24} \times \mathbb{Z}_{2} \\
 \end{eqnarray*}
\end{example}

If a student wishes to determine which finite abelian group they have generated from a product of cyclic groups they can use the above algorithm. However if they generate a group using multiplication mod $n$ they need to investigate further. We provide an example as an explanation of a process they can use.

\begin{example}\label{Ex4}
 \begin{verbatim}
EXAMPLE FOR DETERMINING ISOMORPHISM TYPE OF 
Multiplication mod 32
 
 1) Find all other candidates of order n.
 2) Generate the other candidates using products of cyclic
groups.
 3) List element orders for each candidate and the group.
    in sorted order.
 4) Compare lists.  

# Import the class Group from the file groups.py
from groups import Group

# Create an instance of the group (32,x)
G = Group("mult",32)
print "Testing group:"
print "(32,x) group order = ",G.o
Ggo = G.go()
print "(32,x) sorted element orders = ",sorted(Ggo),"\n"

# Creating candidate groups
G1 = Group("add",16)
G2 = Group(["add","add"],[8,2])
G3 = Group(["add","add","add"],[4,2,2])
G4 = Group(["add","add","add","add"],[2,2,2,2])

print "Candidate groups:"
G1go = G1.go()
print "(16,+) sorted element orders =
 ",sorted(G1go)
G2go = G2.go()
print "(8,+)x(2,+) sorted element orders =
 ",sorted(G2go)
G3go = G3.go()
print "(4,+)x(2,+)x(2,+) sorted element orders =
 ",sorted(G3go)
G4go = G4.go()
print "(2,+)x(2,+)x(2,+)x(2,+) sorted element orders
 = ",sorted(G4go),"\n"

print "Conclusion:"
print "(32,x) is therefore isomorphic to (8,+)x(2,+)"
\end{verbatim}
\end{example}
\begin{remark}
It is clear from the example that for large numbers of elements this can be difficult to compare groups. Hence we can restrict our checking to $p$-elements if required.
\end{remark}

\appendix
\section{Repository Links}

Here we provide the URL where the relevant \textsc{PYTHON} files can be located. The files required are \textsc{Pure.py} and \textsc{Groups.py}.
\begin{verbatim}
https://github.com/Smeths/pygroup
 \end{verbatim}
 
\section{Output}
\subsection{Example~\ref{Ex2}}
\begin{verbatim}
Creating a group (10,+)
Group Elements =  [0, 1, 2, 3, 4, 5, 6, 7, 8, 9]
Group Identity =  0
Group Order =  10
Inverse of element 3 =  7
Multiplication of 7 and 6 =  3
7 to the power 3 =  1
Element:  0  has order:  1
Element:  1  has order:  10
Element:  2  has order:  5
Element:  3  has order:  10
Element:  4  has order:  5
Element:  5  has order:  2
Element:  6  has order:  5
Element:  7  has order:  10
Element:  8  has order:  5
Element:  9  has order:  10
\end{verbatim}
 
\subsection{Example~\ref{Ex3}}
\begin{verbatim}
Creating a group (15,x)
Group Elements =  [1, 2, 4, 7, 8, 11, 13, 14]
Group Identity =  1
Group Order =  8
Inverse of element 2 =  8
Multiplication of 2 and 10 =  5
2 to the power 10 =  4
Group orders:
Element:  1  has order:  1
Element:  2  has order:  4
Element:  4  has order:  2
Element:  7  has order:  4
Element:  8  has order:  4
Element:  11  has order:  2
Element:  13  has order:  4
Element:  14  has order:  2
\end{verbatim}
\subsection{Example~\ref{Ex5}}
\begin{verbatim}
Create (120,+)
Create subgroup with elements 60,30 and 15
60 creates [60, 0]
30 creates [30, 60, 90, 0]
15 creates [15, 30, 45, 60, 75, 90, 105, 0]
Subgroup Elements =  set([0, 105, 75, 45, 15, 90, 60, 30])
Subgroup Identity =  0
Subgroup Order =  8
Inverse of element 15 =  105
Multiplication of 15 and 30 =  45
15 to the power 3 =  45
Element:  0  has order:  1
Element:  105  has order:  8
Element:  75  has order:  8
Element:  45  has order:  8
Element:  15  has order:  8
Element:  90  has order:  4
Element:  60  has order:  2
Element:  30  has order:  4
\end{verbatim}
\subsection{Example~\ref{Ex6}}
\begin{verbatim}
Create (64,x)
Create subgroup with elements 17 and 7
17 creates =  [17, 33, 49, 1]
7 creates =  [7, 49, 23, 33, 39, 17, 55, 1]
Subgroup Elements =  set([1, 17, 33, 7, 49, 23, 55, 39])
Subgroup Identity =  1
Subgroup Order =  8
Inverse of element 25 =  1
Multiplication of 7 and 25 =  15
18 to the power 3 =  15
Element:  1  has order:  1
Element:  17  has order:  2
Element:  33  has order:  2
Element:  7  has order:  4
Element:  49  has order:  2
Element:  23  has order:  4
Element:  55  has order:  4
Element:  39  has order:  4
\end{verbatim}
\subsection{Example~\ref{Ex8}}
\begin{verbatim}
Create a group (5,+)x(9,x)
Group elements:
Element  1  is  [0, 1]
Element  2  is  [0, 2]
Element  3  is  [0, 4]
Element  4  is  [0, 5]
Element  5  is  [0, 7]
Element  6  is  [0, 8]
Element  7  is  [1, 1]
Element  8  is  [1, 2]
Element  9  is  [1, 4]
Element  10  is  [1, 5]
Element  11  is  [1, 7]
Element  12  is  [1, 8]
Element  13  is  [2, 1]
Element  14  is  [2, 2]
Element  15  is  [2, 4]
Element  16  is  [2, 5]
Element  17  is  [2, 7]
Element  18  is  [2, 8]
Element  19  is  [3, 1]
Element  20  is  [3, 2]
Element  21  is  [3, 4]
Element  22  is  [3, 5]
Element  23  is  [3, 7]
Element  24  is  [3, 8]
Element  25  is  [4, 1]
Element  26  is  [4, 2]
Element  27  is  [4, 4]
Element  28  is  [4, 5]
Element  29  is  [4, 7]
Element  30  is  [4, 8]
Group Identity =  [0, 1]
Group Order =  30
Element 3 =  [0, 4]
Inverse of element 3 =  [0, 7]
Element 7 =  [1, 1]
Element 6 =  [0, 8]
Multiplication of element 7 and element 6 =  [1, 8]
Element 7 to the power of 3 =  [3, 1]
Group orders:
Element  [0, 1]  has order  1
Element  [0, 2]  has order  6
Element  [0, 4]  has order  3
Element  [0, 5]  has order  6
Element  [0, 7]  has order  3
Element  [0, 8]  has order  2
Element  [1, 1]  has order  5
Element  [1, 2]  has order  30
Element  [1, 4]  has order  15
Element  [1, 5]  has order  30
Element  [1, 7]  has order  15
Element  [1, 8]  has order  10
Element  [2, 1]  has order  5
Element  [2, 2]  has order  30
Element  [2, 4]  has order  15
Element  [2, 5]  has order  30
Element  [2, 7]  has order  15
Element  [2, 8]  has order  10
Element  [3, 1]  has order  5
Element  [3, 2]  has order  30
Element  [3, 4]  has order  15
Element  [3, 5]  has order  30
Element  [3, 7]  has order  15
Element  [3, 8]  has order  10
Element  [4, 1]  has order  5
Element  [4, 2]  has order  30
Element  [4, 4]  has order  15
Element  [4, 5]  has order  30
Element  [4, 7]  has order  15
Element  [4, 8]  has order  10
\end{verbatim}
\subsection{Example~\ref{Ex8}}
\begin{verbatim}
Create (6,+)x(9,x)
Group elements:
Element  1  is  [0, 1]
Element  2  is  [0, 2]
Element  3  is  [0, 4]
Element  4  is  [0, 5]
Element  5  is  [0, 7]
Element  6  is  [0, 8]
Element  7  is  [1, 1]
Element  8  is  [1, 2]
Element  9  is  [1, 4]
Element  10  is  [1, 5]
Element  11  is  [1, 7]
Element  12  is  [1, 8]
Element  13  is  [2, 1]
Element  14  is  [2, 2]
Element  15  is  [2, 4]
Element  16  is  [2, 5]
Element  17  is  [2, 7]
Element  18  is  [2, 8]
Element  19  is  [3, 1]
Element  20  is  [3, 2]
Element  21  is  [3, 4]
Element  22  is  [3, 5]
Element  23  is  [3, 7]
Element  24  is  [3, 8]
Element  25  is  [4, 1]
Element  26  is  [4, 2]
Element  27  is  [4, 4]
Element  28  is  [4, 5]
Element  29  is  [4, 7]
Element  30  is  [4, 8]
Element  31  is  [5, 1]
Element  32  is  [5, 2]
Element  33  is  [5, 4]
Element  34  is  [5, 5]
Element  35  is  [5, 7]
Element  36  is  [5, 8]
Create subgroup with elements 2 and 22
2 creates =  [[0, 2], [0, 4], [0, 8], [0, 7], [0, 5], [0, 1]]
22 creates =  [[3, 5], [0, 7], [3, 8], [0, 4], [3, 2], [0, 1]]
Subgroup elements:
Element  1  is  [0, 1]
Element  2  is  [3, 2]
Element  3  is  [3, 1]
Element  4  is  [0, 7]
Element  5  is  [3, 8]
Element  6  is  [0, 5]
Element  7  is  [0, 4]
Element  8  is  [3, 7]
Element  9  is  [0, 8]
Element  10  is  [3, 4]
Element  11  is  [0, 2]
Element  12  is  [3, 5]
Subgroup Identity =  [0, 1]
Subgroup Order =  12
Element 3 =  [3, 1]
Inverse of element 3 =  [3, 1]
Element 7 =  [0, 4]
Element 6 =  [0, 5]
Multiplication of element 7 and element 6 =  [0, 2]
Element 7 to the power of 3 =  [0, 1]
Group orders:
Element  [0, 1]  has order  1
Element  [3, 2]  has order  6
Element  [3, 1]  has order  2
Element  [0, 7]  has order  3
Element  [3, 8]  has order  2
Element  [0, 5]  has order  6
Element  [0, 4]  has order  3
Element  [3, 7]  has order  6
Element  [0, 8]  has order  2
Element  [3, 4]  has order  6
Element  [0, 2]  has order  6
Element  [3, 5]  has order  6
\end{verbatim}

\subsection{Example~\ref{Ex4}}
\begin{verbatim}
Testing group:
(32,x) group order =  16
(32,x) sorted element orders 
=  [1, 2, 2, 2, 4, 4, 4, 4, 8, 8, 8, 8, 8, 8, 8, 8] 

Candidate groups:
(16,+) sorted element orders 
= [1, 2, 4, 4, 8, 8, 8, 8, 16, 16, 16, 16, 16, 16, 16, 16]
(8,+)x(2,+) sorted element orders 
=  [1, 2, 2, 2, 4, 4, 4, 4, 8, 8, 8, 8, 8, 8, 8, 8]
(4,+)x(2,+)x(2,+) sorted element orders 
=  [1, 2, 2, 2, 2, 2, 2, 2, 4, 4, 4, 4, 4, 4, 4, 4]
(2,+)x(2,+)x(2,+)x(2,+) sorted element orders 
=  [1, 2, 2, 2, 2, 2, 2, 2, 2, 2, 2, 2, 2, 2, 2, 2] 

Conclusion:
(32,x) is therefore isomorphic to (8,+)x(2,+)
\end{verbatim}
\end{document}